\newtheorem*{statement*}{Statement}
\newtheorem{statement}{Statement}[section]
\newtheorem{theorem}{Theorem}
\newtheorem*{theoremA}{Theorem A}
\theoremstyle{remark}
\theoremstyle{definition}
\newtheorem*{question}{Question}
\newcommand\R{\mathbb R}
\newcommand\eps{\varepsilon}
\renewcommand\le{\leqslant}
\renewcommand\ge{\geqslant}
\DeclareMathOperator\Rig{Rig}
\DeclareMathOperator\rank{rank}
\DeclareMathOperator\sign{sign}
\date{}
\title{Matrix and tensor rigidity and $L_p$-approximation}
\author{Yuri Malykhin\thanks{Steklov Mathematical Institute. Email:
malykhin-yuri@yandex.ru}}
\begin{document}
\maketitle
\abstract{In this paper we apply methods originated in Complexity
theory to some problems of Approximation.

We notice that the construction of Alman and Williams that disproves the rigidity of
Walsh-Hadamard matrices,
provides good $\ell_p$-approximation for $p<2$. It follows that the 
first $n$ functions of Walsh system can be approximated with an error
$n^{-\delta}$ by a linear space of dimension $n^{1-\delta}$:
$$
d_{n^{1-\delta}}(\{w_1,\ldots,w_n\}, L_p[0,1]) \le n^{-\delta},\quad
p\in[1,2),\;\delta=\delta(p)>0.
$$
We do not know if this is possible for the trigonometric system.

We show that the algebraic method  of Alon--Frankl--R\"odl for
bounding the number of low-signum-rank matrices, works
for tensors: almost all signum-tensors have large signum-rank and can't be
$\ell_1$-approximated by low-rank tensors. This implies lower bounds
for $\Theta_m$~--- the error of $m$-term approximation of multivariate functions
by sums of tensor products $u^1(x_1)\cdots u^d(x_d)$. In particular, for the set
of trigonometric polynomials with spectrum in $\prod_{j=1}^d[-n_j,n_j]$ and of norm
$\|t\|_\infty\le 1$ we have
$$
\Theta_m(\mathcal T(n_1,\ldots,n_d)_\infty,L_1[-\pi,\pi]^d) \ge c_1(d)>0,\quad m\le
c_2(d)\frac{\prod n_j}{\max\{n_j\}}.
$$
Sharp bounds follow for classes of dominated mixed smoothness:
$$
    \Theta_m(W^{(r,r,\ldots,r)}_p,L_q[0,1]^d)\asymp m^{-\frac{rd}{d-1}},\quad\mbox 2\le p\le\infty,\;
1\le q\le 2.
$$
}

\textbf{Keywords}: matrix rigidity, signum rank, Kolmogorov widths, low-rank
approximation

\section{Introduction}

In this paper we consider several problems of low-rank approximation that are
related both to Approximation theory and Complexity Theory. 
We obtain some new results in Approximation using methods originated in
Complexity.

In the Introduction we discuss related notions and state our main results.
The proofs and all details are given in the subsequent sections.


\paragraph{Matrix rigidity and widths.}

In Complexity Theory it is common to relate the compexity of computational tasks to
some measures of complexity of simpler objects, say, matrices or graphs.
One important measure of matrix complexity is its \textit{rigidity},
$\Rig(A,m)$~--- the minimum Hamming distance from $A$ to
matrices of rank $\le m$. That is, $\Rig(A,m)$ is the number of entries
that must be modified in $A$ to drop it's rank to $m$.
Matrix rigidity was introduced by Valiant~\cite{V77} as a way to
obtain lower bounds for linear circuits.
He proved that if a linear transformation $x\mapsto A_n x$ defined by
a $n\times n$ matrix can be computed by a log-depth circuit of size $O(n)$,
then $\Rig(A_n,O(n/\log\log n))\le n^{1+\eps}$ for every fixed $\eps>0$.
It follows that matrices with high rigidity, e.g.,
$\Rig(A_n,\eps n)\ge n^{1+\eps}$ for some fixed $\eps>0$, provide
a superlinear lower bound for the size of corresponding linear circuits.
Though, still no constructive family of Valiant-rigid matrices is known.
See the survey~\cite{L08} for more details.

In Approximation Theory one is usually interested in approximation within a distance
that comes from some norm, e.g., $\ell_p$ norm:
$\|x\|_{\ell_p^N}=(|x_1|^p+\ldots+|x_N|^p)^{1/p}$, $1\le p<\infty$,
$\|x\|_\infty=\max|x_i|$.

The case of element-wise $\ell_2$-norm (i.e., Frobenius norm) 
is well studied. The value of the best $m$-rank approximation is given by the
the Eckart--Young theorem in terms of singular values:
\begin{equation}
    \label{EY}
\min_{\rank B\le m}\sum_{i,j}|A_{i,j}-B_{i,j}|^2 = \sum_{k>m}\sigma_k(A)^2.
\end{equation}
It follows that any orthogonal matrix is ``$\ell_2$-rigid'', i.e. it can't be
well approximated in $\ell_2$ by low-rank matrices.  Indeed, such matrix has Frobenius
norm $n^{1/2}$, singular values $\sigma_k(A)\equiv 1$ and the distance to
any matrix of rank $\le m$ is at least $(n-m)^{1/2}$.

An interesting case is the maximum norm $\|A\|_\infty := \max_{i,j}|A_{i,j}|$.
There is a corresponding notion of the $\eps$-rank:
$$
\rank_\eps(A):=\min\{\rank B\colon \|A-B\|_\infty\le\eps\}.
$$
It appeared as a lower bound for the complexity of Quantum
Communication, see~\cite{BW01},~\cite{LS09} and \cite{ALSV13}.
On the other side, the error of the best $m$-rank approximation in
$\ell_\infty$-norm is a special case of a well-known concept~--- Kolmogorov
widths, defined by A.N.~Kolmogorov in 1936. Let us give the definition.

Let $X$ be a normed space and $W\subset X$. The Kolmogorov $m$-width of $W$ in
$X$ is the quantity
$$
d_m(W,X) := \inf_{\dim Q_m\le m} \sup_{x\in W}\rho(x, Q_m)_X,
$$
where $\inf$ is taken over linear subspaces $Q_m\subset X$ of $\dim\le m$ and $\rho$ is the distance to a set:
$\rho(x,Q)_X:=\inf_{y\in Q} \|x-y\|_X$.

The books~\cite[Ch.13, 14]{LGM96} and \cite{P85} are standard references on the
subject. See also a recent survey~\cite{DTU18}, \S4.3.

So, we have a simple equivalence (emphasized in~\cite{KMR18}):
$$
\rank_\eps(A) \le m \quad \Longleftrightarrow \quad
d_m(\{A_i\}_{i=1}^{n_1},\ell_\infty^{n_2}) \le
\eps
$$
for $A\in\R^{n_1\times n_2}$ and $A_i$ being the rows of $A$.

Hadamard matrices (square matrices with $\pm1$ entries and orthogonal rows) 
are ``$\ell_\infty$-rigid'': it follows from the $\ell_2$ case 
that $\|A-B\|_\infty \ge n^{-1}(n-m)^{1/2} \ge (1-\eps)^{1/2}$ if $A$ is Hadamard and $\rank B\le\eps n$.
However, the problem of estimating $\eps$-rank is difficult
in many other cases.  An interesting example is the family of upper-triangular
0/1 matrices $\Delta_n$; it is known only that (see~\cite{ALSV13} and the
discussion in~\cite{KMR18})
$$
c\log^2n \le \rank_{1/3}(\Delta_n) \le C\log^3n.
$$

The $\ell_1$ case is less studied.
It is an interesting question about ``$\ell_1$-rigid'' matrices: can one give an example of constructive
family of $n\times n$ matrices with, say, $\pm1$ entries, such that
\begin{equation}
    \label{l1rig}
    \min_{\rank B\le \eps n}\sum_{i,j=1}^n|A_{i,j}-B_{i,j}|\ge \eps n^2
\end{equation}
for some small $\eps>0$? We will see that almost all
signum matrices are $\ell_1$-rigid, but the proof is not
constructive.\footnote{We mention in this regard Open Problem 5.6
from~\cite{T03}, that was a starting point for our research. \textit{Let $R_N$
be the Rudin--Shapiro polynomials.
Prove that $\Theta_m(R_N(x-y),L_1[-\pi,\pi]^2)\gg N^{1/2}$.}}

It would be interesting to relate the (original) rigidity to
$\ell_p$-approximation and widths.
We give one particular example of such a connection.

It was conjectured that Walsh--Hadamard matrices
$H^k\in\{-1,1\}^{n\times n }$, $n=2^k$, are Valiant-rigid. To define $H^k$ it is
convenient to
index its rows and columns by boolean vectors
$x,y\in\{0,1\}^k$:
$$
H^k_{x,y} = (-1)^{\sum_{i=1}^n x_iy_i}.
$$
It was a surprising result due to J.~Alman and R.~Williams~\cite{AW17}
that $H^k$ are not rigid. They proved that one can change
$n^{c\eps\log(1/\eps)}$ entries in each row of $H^k$ and drop its rank below
$n^{1-c\eps^2}$. 

We prove that the construction of~\cite{AW17} gives a matrix
with not large entries (see Statement~\ref{stm_walsh_hadamard}). 
It follows that this matrix provides nontrivial $\ell_p$
approximation for rows of $H^k$ for $p<2$.
Rows of $H^k$ correspond to the values of Walsh functions (in the Paley numeration)~--- the well-known
orthonormal system on $[0,1]$, see~\cite[\S1.1]{GES}.
So, we can formulate the approximation property in terms of Kolmogorov widths.
\begin{theorem}
    \label{th_walsh}
    Let $w_1,w_2,\ldots$ be the Walsh functions. For any $p\in[1,2)$
    there exists $\delta=\delta(p)>0$ such that for sufficiently large $n$ the
    inequality holds
    $$
    d_{n^{1-\delta}}(\{w_1,\ldots,w_n\},L_p[0,1])\le n^{-\delta}.
    $$
\end{theorem}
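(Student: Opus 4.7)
The plan is to transfer the non-rigidity statement of Alman--Williams, upgraded with the sup-norm control promised by Statement~\ref{stm_walsh_hadamard}, to the continuous setting via the standard identification of Paley--Walsh functions with rows of $H^k$. Namely, for $i\le n=2^k$, the function $w_i$ is constant on each dyadic interval $I_j=[j/n,(j+1)/n)$ with value $H^k_{x(i),y(j)}$, where $x(i),y(j)\in\{0,1\}^k$ are the binary expansions of $i-1$ and $j$. Given a matrix $B\in\R^{n\times n}$, define step functions $b_i\in L_p[0,1]$ by $b_i|_{I_j}=B_{x(i),y(j)}$; their linear span $Q$ satisfies $\dim Q\le\rank B$, and
$$
\|w_i-b_i\|_{L_p[0,1]}=n^{-1/p}\,\|H^k_{x(i),\cdot}-B_{x(i),\cdot}\|_{\ell_p^n}.
$$

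Next, apply Statement~\ref{stm_walsh_hadamard} to obtain, for each small $\eps>0$, a matrix $B$ with $\rank B\le n^{1-c_1\eps^2}$, differing from $H^k$ in at most $n^{c_2\eps\log(1/\eps)}$ entries per row, and with $\|B\|_\infty\le n^{\alpha(\eps)}$ for some $\alpha(\eps)<1/2$ (valid for $\eps$ sufficiently small). In each row, the nonzero coordinates of $H^k-B$ are bounded by $1+n^{\alpha(\eps)}\le 2n^{\alpha(\eps)}$, and there are at most $n^{c_2\eps\log(1/\eps)}$ of them, so
$$
\|w_i-b_i\|_{L_p[0,1]}\le n^{-1/p}\bigl(n^{c_2\eps\log(1/\eps)}\cdot(2n^{\alpha(\eps)})^p\bigr)^{1/p}=2\,n^{\alpha(\eps)-(1-c_2\eps\log(1/\eps))/p}.
$$
For $p\in[1,2)$ one has $\alpha(\eps)p<1$ (because $\alpha(\eps)<1/2$ and $p<2$), while $c_2\eps\log(1/\eps)\to 0$ as $\eps\to 0$, so the exponent is strictly negative for $\eps=\eps(p)$ small enough. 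Setting $\delta:=\min\{c_1\eps^2,\,(1-\alpha(\eps)p-c_2\eps\log(1/\eps))/p\}>0$ yields both $\dim Q\le n^{1-\delta}$ and $\|w_i-b_i\|_{L_p}\le 2n^{-\delta}$ uniformly in $i\le n$; absorbing the factor $2$ into a slight decrease of $\delta$ gives the claim.

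The only nontrivial input is Statement~\ref{stm_walsh_hadamard}: extracting from the Alman--Williams probabilistic-polynomial construction a quantitative bound $\|B\|_\infty\le n^{\alpha(\eps)}$ with $\alpha(\eps)<1/2$. The restriction $p<2$ in the theorem arises exactly as the condition $\alpha(\eps)\,p<1$ in the computation above, and it matches the intrinsic obstruction at $p=2$ coming from Eckart--Young applied to the orthogonal matrix $H^k$; so this constraint is not an artifact of the method but reflects genuine $\ell_2$-rigidity of Hadamard matrices. Once the entry bound is in hand, the transfer from matrix rigidity to Kolmogorov widths is elementary.
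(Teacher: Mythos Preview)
Your argument is the same as the paper's: invoke Statement~\ref{stm_walsh_hadamard}, identify the Walsh functions with rows of $H^k$ via step functions on dyadic intervals, and bound the $\ell_p$-error row by row using the sparsity of $H^k-B$ together with the sup-norm bound on $B$. One small correction: Statement~\ref{stm_walsh_hadamard}(iii) gives $\|B\|_\infty\le n^{1/2+c_3\eps\log(1/\eps)}$, so $\alpha(\eps)=\tfrac12+c_3\eps\log(1/\eps)$ is slightly \emph{larger} than $1/2$, not smaller as you wrote. Your conclusion $\alpha(\eps)\,p<1$ for $\eps$ small is still correct, but the reason is that $\alpha(\eps)\to\tfrac12$ as $\eps\to0$ and $p<2$, so $\alpha(\eps)\,p\to p/2<1$; this is exactly the paper's computation $n^{-1/p+1/2+c\eps\log(1/\eps)}\le n^{-\delta}$.
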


Note that good approximation in $L_2$ is impossible.  In fact, for any
orthonormal system $\varphi_1,\ldots,\varphi_n$ in $L_2[0,1]$ one has\footnote{
The last equality in this chain follows from~\eqref{EY}. The second equality
in terms of widths appeared in a paper of S.B. Stechkin (1954) and in terms of
matrices~--- in a paper of A.I.~Maltsev (1947). See~\cite{Tikh87} for references
and historical details.}
\begin{multline*}
    d_m(\{\varphi_1,\ldots,\varphi_n\},L_2[0,1]) =
    \inf_{\dim Q_m\le m}\max_{1\le k\le n}\rho(\varphi_k,Q_m)_{L_2} = \\
    = \inf_{\dim Q_m\le m}\left(\frac1n\sum_{k=1}^n\rho(\varphi_k,Q_m)_{L_2}^2\right)^{1/2} =
     (1-m/n)^{1/2}.
\end{multline*}

It would be interesting to consider the $L_1$-``rigidity'' for other orthonormal
systems.

\begin{question}
    Is there a good $L_1$-approximation of the trigonometric system:
$$
d_{o(n)}(\{\exp(ikx)\}_{k=1}^n,L_1[-\pi,\pi]) = o(1)\;?
$$
\end{question}
This seems probable. Note that it was proven recently by Dvir and
Liu~\cite{DL19} that the discrete Fourier matrices are not rigid:
$$
\Rig(F^n,n\exp(-c_\eps\log^c n)) \le n^{1+\eps},
$$
for any $\eps>0$ and sufficiently large $n$. (In fact, they prove that the
number of changes is at most $n^\eps$ in each row and column.)
The proof is much more complicated than for Walsh--Hadamard and it is hard to
figure out upper bounds for elements of approximation matrices.

Some orthonormal systems are $L_1$-rigid. For example, good approximation of
Rademacher system is impossible even in average, see
Statement~\ref{stm_rademacher}.

\paragraph{Signum rank and multivariate functions approximation.}

There is another measure of matrix complexity called signum-rank. Let $S$ be a
signum matrix, i.e. matrix with $\pm1$ entries. Its signum rank, $\rank_\pm(S)$,
is defined as the minimal rank of matrix $T$ with $\sign T_{i,j}\equiv S_{i,j}$.
We suppose that $\sign(0)=0$, so $T$ must have nonzero entries. This notion
was introduced by R.~Paturi and J.~Simon~\cite{PS86} in the context of unbounded
error probabilistic communication complexity.

It is highly nontrivial that there are matrices with high signum rank; this was
proven by Alon, Frank and R\"odl in~\cite{AFR85}. Let us describe their method.

Let $p_1,\ldots,p_l$ be polynomials in $\R^k$ of degree at most $s$.
Denote by $Z(p_1,\ldots,p_l)$ the cardinality of the set of vectors
$(\sign p_1(x),\ldots,\sign p_l(x))$ with non-zero coordinates.
Let $Z_{\max}(k,l,s)$ be the maximal possible value of $Z(p_1,\ldots,p_l)$.
Warren~\cite{W68} proved that this number is less or equal than
$(4esl/k)^k$ for $l\ge k$. (He applied this inequality to get lower bounds for
the approximation error of functional classes by certain ``polynomial sets''.)

In~\cite{AFR85} the following observation was made. If $S$ is a $n_1\times n_2$ signum matrix,
$\rank_\pm S\le m$, then $S_{i,j}=\sign \sum_{k=1}^m u_{k,i}v_{k,j}$ for some
$u_k\in\R^{n_1}$, $v_k\in\R^{n_2}$. Hence the number of such matrices does not
exceed $Z(\{p_{i,j}\})$, where $p_{i,j}(x,y)=\sum_{k=1}^m x_{k,i}y_{k,j}$ are
degree-2 polynomials of $m(n_1+n_2)$ variables.

Matrix-related methods often fail for tensors, e.g., there is no good analog of
Singular Vector Decomposition. We observed that the method of~\cite{AFR85}
applies to tensors as well. The definition of tensor signum-rank is analogous to
that of matrices. Let $S_m(n_1,\ldots,n_d)$ be the cardinality of signum tensors
in $\{-1,1\}^{n_1\times \ldots\times n_d}$ having signum-rank at most $m$. Then the
above reasoning applied to tensors gives us the following.

\begin{statement*}
    $S_m(n_1,\ldots,n_d) \le Z_{\max}(m(n_1+\ldots+n_d), n_1\cdots n_d, d)$.
\end{statement*}

Together with Warren's bound this proves that almost all signum--tensors have
high signum rank. Moreover, allmost all of them can't be well--approximated in the
$\ell_1$ norm by low-rank tensors.

This approach leads to new lower bounds in $L_1$ for the continuous analog of
low-rank approximation, namely, $m$-term approximation of
multivariate functions by tensor products. Let $X=\prod_{i=1}^d [a_i,b_i]$.
Define for $f\in L_p(X)$
\begin{equation}
    \Theta_m(f,L_p) := \inf_{u^{i,s}\colon [a_i,b_i]\to\mathbb R} \|f - \sum_{s=1}^m u^{1,s}(x_1)\cdots
u^{d,s}(x_d)\|_{L_p(X)}
\end{equation}
and $\Theta_m(\mathcal F,L_p) := \sup_{f\in\mathcal F}\Theta_m(f,L_p)$.

Using discretization and above arguments on tensors, one can lower bound
$\Theta_m$ for various function classes.

To work with classes of fractional
smoothness it is convenient to consider classes of trigonometric polynomials.
Let $\mathbf{n}=(n_1,\ldots,n_d)\in\mathbb N^d$ and denote by $\mathcal T(\mathbf{n})_\infty$
the set of real trigonometric polynomials satisfying $\|t\|_\infty\le 1$ and
having the spectrum in $\prod_{j=1}^d[-n_j,n_j]$.
\begin{theorem}
    \label{thm_poly}
    For any $d\in\mathbb N$, $\mathbf{n}=(n_1,\ldots,n_d)$,
    we have
    \begin{equation}
    \Theta_m(\mathcal T(\mathbf{n})_\infty, L_1[-\pi,\pi]^d) \ge
        c_2(d)>0,\quad\mbox{if $m\le c_1(d)\frac{\prod n_j}{\max\{n_j\}}$.}
    \end{equation}
\end{theorem}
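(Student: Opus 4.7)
The plan is to combine tensor $\ell_1$-rigidity (from the Statement$^*$) with a Marcinkiewicz--Zygmund discretization. First, I would fix a product grid $\{x^{(i)}\}\subset[-\pi,\pi]^d$ of size $\prod_jN_j$ with $N_j\asymp n_j$ such that
\[
\|p\|_{L_1[-\pi,\pi]^d}\asymp\frac{(2\pi)^d}{\prod_jN_j}\sum_{i}|p(x^{(i)})|
\]
holds for every trigonometric polynomial $p$ with spectrum in $\prod_j[-Cn_j,Cn_j]$. For a candidate tensor-product approximant $\sum_{s=1}^m u^{1,s}(x_1)\cdots u^{d,s}(x_d)$, I would convolve each $u^{j,s}$ in the variable $x_j$ with a de la Vall\'ee Poussin kernel of degree $O(n_j)$: this replaces $u^{j,s}$ by a trigonometric polynomial of degree $O(n_j)$ while multiplying the $L_1$-error by at most $C^d$. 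The approximant evaluated on the grid is then a rank-$\le m$ tensor, and the problem reduces to lower-bounding $(\prod N_j)^{-1}\sum_i|t(x^{(i)})-U_i|$ over rank-$\le m$ tensors $U$.

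Second, I would upgrade the Statement$^*$ to an $\ell_1$-rigidity bound. The number of sign patterns on the grid achievable by rank-$\le m$ tensors is at most $Z_{\max}(m\sum_jN_j,\prod_jN_j,d)$, which by Warren is $\exp(O_d(m\sum_jN_j\log(\prod_jN_j/(m\sum_jN_j))))$. Multiplying by the volume of a Hamming ball of radius $\tfrac12\epsilon\prod_jN_j$ and comparing with $2^{\prod_jN_j}$ yields $\epsilon=\epsilon(d)>0$ and $c_1(d)>0$ such that, once $m\le c_1(d)\prod_jN_j/\max_jN_j$, almost every signum tensor $S\in\{-1,+1\}^{N_1\times\cdots\times N_d}$ satisfies
\[
\min_{\rank U\le m}\|S-U\|_{\ell_1}\ge\epsilon\prod_jN_j.
\]

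Third and crucially, I would realize such a rigid $S$ as (approximately) the grid values of some $t\in\mathcal T(\mathbf n)_\infty$, losing only a dimension-only multiplicative constant. The candidate construction is $t:=(V_{\mathbf n}*F_S)/\|V_{\mathbf n}*F_S\|_\infty$, where $V_{\mathbf n}(x)=\prod_jV_{n_j}(x_j)$ is a tensor-product de la Vall\'ee Poussin (or Jackson) kernel with degrees chosen so that $t\in\mathcal T(\mathbf n)_\infty$, and $F_S=\sum_iS_i\chi_{I_i}$ is the step function on the grid cells $I_i$. Computing the weights $c_{j,i}:=\int_{I_i}V_{\mathbf n}(x^{(j)}-y)\,dy$, one finds $t(x^{(j)})=\alpha(d)S_j+r_j$, where $\alpha(d)>0$ depends only on $d$ and, for a suitable choice of $N_j/n_j$, the average remainder $(\prod N_j)^{-1}\sum_j|r_j|<\tfrac12\alpha(d)\epsilon(d)$ (the latter bound follows by a second-moment estimate over random rigid $S$ coupled with the $\ell_2$-concentration of $r_j$). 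Plugging into Steps~1--2 yields $\Theta_m(t,L_1)\ge c_2(d)>0$ uniformly in $\mathbf n$.

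The principal obstacle is Step~3: naive Lagrange interpolation of $S$ produces $t$ with $L_\infty$-norm bounded only by the tensor-product Lebesgue constant $\asymp\prod_j\log n_j$, after which the rigidity gap contracts by $\prod_j\log n_j$ and the lower bound decays with $\mathbf n$. Achieving a uniform $c_2(d)$ therefore demands a construction whose loss is dimension-only, as in the de la Vall\'ee Poussin smoothing sketched above; verifying that the off-diagonal contributions to the grid values are negligible on average---so that the $\ell_1$-rigidity of $S$ survives the passage to the grid values of $t$---is the technical heart of the argument.
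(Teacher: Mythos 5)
Your overall strategy is the same as the paper's — discretize, then use Warren's bound on sign patterns to establish tensor $\ell_1$-rigidity — but Step~3 as written contains a genuine gap, and it is precisely at the point you yourself flag as ``the technical heart.''

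The difficulty is that you insist on a grid at the Nyquist rate, $N_j\asymp n_j$, because your Step~1 invokes the two-sided Marcinkiewicz discretization of the $L_1$-norm (which fails on coarser grids). But at that rate no trigonometric kernel of degree $\lesssim n_j$ with bounded $L_1$-norm can have small off-diagonal weight on the grid: a non-negative degree-$n$ kernel with total mass $1$ necessarily places a constant fraction of its mass in cells adjacent to the center, so the neighbouring weight $c_{j,j'}$ is $\Theta(c_{j,j})$, and $\sum_{i\ne j}c_{j,i}^2=\Theta(1)$. Consequently $\mathbb E\bigl[\tfrac1N\sum_j|r_j|\bigr]=\Theta(1)$ over random $S$ (a second-moment bound only makes this worse in higher $d$), whereas you need it to be below $\tfrac12\alpha(d)\eps(d)$ with $\eps(d)$ already small. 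Picking $S$ in the intersection of ``rigid'' and ``small remainder'' does not help, because the second set typically has measure $0$, not measure close to $1$.

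The paper resolves this with two moves that your sketch misses. First, it replaces Step~1's Marcinkiewicz argument by the elementary bound $\|f\|_{L_1}\ge\sum_{A\in\mathcal A}\bigl|\int_Af\bigr|$ over a disjoint product family of small boxes $\mathcal A$; this requires nothing of the approximant, so your de la Vall\'ee Poussin smoothing of the $u^{j,s}$ is unnecessary. Second, and crucially, it decouples the two grids: the shattered boxes sit on a \emph{thinned} grid $\mathcal X^\alpha$ of mesh $\asymp 1/(\alpha n_j)$ for a small $\alpha=\alpha(d)$, while the fine grid $\mathcal X_{2\mathbf n}$ is used only to read off $\|t_\sigma\|_\infty\le c(d)$ via Marcinkiewicz in $\ell_\infty$. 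On the thinned grid the Fej\'er-kernel bump $\psi$ satisfies $\sum_{x\ne0}|\psi(x)|\lesssim\sum_{k\ne0}\prod_j\min(1,c\alpha^2/k_j^2)<1/2$ for small $\alpha$, so the off-diagonal contribution is pointwise small — no averaging over $S$, and hence no need to intersect with the rigid set. Thinning only shrinks $c_1(d)$ by $\alpha^{d-1}$, which is harmless. You would need to incorporate this sparse-grid trick (or a genuinely different mechanism to kill the $\Theta(1)$ off-diagonal weight) to make Step~3 go through.
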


There is a well-known result of V.~Temlyakov~\cite{T89} providing the upper bound
for the classes of functions with bounded mixed derivative
$\mathbf{r}=(r,r,\ldots,r)$:
$$
\Theta_m(W^{\mathbf{r}}_2,L_2[0,1]^d) \ll m^{-\frac{rd}{d-1}}.
$$
Using Theorem~\ref{thm_poly} we can get a sharp lower bound
$$
\Theta_m(W^{\mathbf{r}}_\infty,L_1[0,1]^d) \gg m^{-\frac{rd}{d-1}}.
$$
Putting these bounds together, we obtain the following result.
\begin{theorem}
    \label{th_mixed}
    For $d\in\mathbb N$, $r>0$, $\mathbf{r}:=(r,\ldots,r)$, we have
    $$
    \Theta_m(W^{\mathbf{r}}_p,L_q[0,1]^d)\asymp m^{-\frac{rd}{d-1}},\quad\mbox 2\le p\le\infty,\;
    1\le q\le 2.
    $$
\end{theorem}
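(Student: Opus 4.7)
The plan is to sandwich $\Theta_m(W^{\mathbf{r}}_p,L_q[0,1]^d)$ between the two endpoint bounds already recorded before the statement: Temlyakov's upper bound $\Theta_m(W^{\mathbf{r}}_2,L_2)\ll m^{-rd/(d-1)}$ and the matching lower bound $\Theta_m(W^{\mathbf{r}}_\infty,L_1)\gg m^{-rd/(d-1)}$ derived from Theorem~\ref{thm_poly}. The whole argument reduces to two classical monotonicity facts on the probability space $[0,1]^d$: first, the $L_s$-norms are nondecreasing in $s$, so $\|\cdot\|_{L_1}\le\|\cdot\|_{L_q}\le\|\cdot\|_{L_2}$ for $1\le q\le2$; and second, the unit balls satisfy $W^{\mathbf{r}}_\infty\subset W^{\mathbf{r}}_p\subset W^{\mathbf{r}}_2$ isometrically for $2\le p\le\infty$, since a bound on the mixed derivatives in $L_\infty$ forces the same bound in every $L_s$ on a domain of unit measure.

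For the upper bound I would use that $\Theta_m(f,L_q)\le\Theta_m(f,L_2)$ (any tensor-product approximant for $L_2$ is also good in $L_q$), hence $\Theta_m(\mathcal F,L_q)\le\Theta_m(\mathcal F,L_2)$ for any class $\mathcal F$; and that $\Theta_m$ is monotone in the class, so $\Theta_m(W^{\mathbf{r}}_p,L_2)\le\Theta_m(W^{\mathbf{r}}_2,L_2)$ when $p\ge2$. Chaining these inequalities gives
$$\Theta_m(W^{\mathbf{r}}_p,L_q)\le\Theta_m(W^{\mathbf{r}}_p,L_2)\le\Theta_m(W^{\mathbf{r}}_2,L_2)\ll m^{-rd/(d-1)}.$$
For the lower bound I would reverse the chain: $\Theta_m(f,L_q)\ge\Theta_m(f,L_1)$ because the $L_1$-norm is dominated by the $L_q$-norm, so $\Theta_m(W^{\mathbf{r}}_p,L_q)\ge\Theta_m(W^{\mathbf{r}}_p,L_1)$; and the class inclusion $W^{\mathbf{r}}_\infty\subset W^{\mathbf{r}}_p$ yields $\Theta_m(W^{\mathbf{r}}_p,L_1)\ge\Theta_m(W^{\mathbf{r}}_\infty,L_1)$. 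Combined with the already-quoted consequence of Theorem~\ref{thm_poly} this produces $\Theta_m(W^{\mathbf{r}}_p,L_q)\gg m^{-rd/(d-1)}$.

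There is no real obstacle in this argument — all the genuine work lies in Theorem~\ref{thm_poly} and in the passage from trigonometric polynomials to the Sobolev class via Bernstein-type rescaling (take a polynomial with spectrum in $[-N,N]^d$ and $\|t\|_\infty\le1$ attaining $\Theta_m(t,L_1)\gg1$ at $m\asymp N^{d-1}$, then $N^{-rd}t$ lies in a fixed multiple of the $W^{\mathbf{r}}_\infty$-ball, giving the rate $m^{-rd/(d-1)}$ after substituting $N\asymp m^{1/(d-1)}$). The only minor point to verify at this final stage is that the implicit constants in the two embedding chains depend only on $p,q,d$ and not on $m$, which is immediate from Jensen's inequality on the probability space $[0,1]^d$; thus Theorem~\ref{th_mixed} follows as a direct corollary of Temlyakov's theorem and the lower bound established via the tensor signum-rank method.
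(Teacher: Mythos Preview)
Your proposal is correct and follows the same route as the paper: combine Temlyakov's upper bound $\Theta_m(W^{\mathbf{r}}_2,L_2)\ll m^{-rd/(d-1)}$ with the lower bound $\Theta_m(W^{\mathbf{r}}_\infty,L_1)\gg m^{-rd/(d-1)}$ coming from Theorem~\ref{thm_poly} via the inclusion $N^{-rd}\mathcal T(N,\ldots,N)_\infty\subset cW^{\mathbf{r}}_\infty$, and interpolate using the monotonicity of $\Theta_m$ in both the class and the norm. The paper simply writes ``Putting this together'' where you have spelled out the two embedding chains, so your version is a faithful expansion of the same argument.
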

This strenghtens the results of Temlyakov and Bazarkhanov,~\cite{BT15}.
Note that the case $d=2$ is well studied and the corresponding result is known, see Theorem 1.1 of~\cite{T92};
the novelty of our results is the sharp bounds for $d>2$.

\section{Approximation of Walsh and Rademacher functions}
\label{section_walsh}

We will prove the following bound.
\begin{statement}
    \label{stm_walsh_hadamard}
    Let $\eps\in(0,\frac12)$. For any sufficiently large $k$ there exists a matrix
    $B\in\R^{n\times n}$, $n=2^k$, such that the conditions hold:
    \begin{itemize}
        \item[(i)] in each row $B$ differs from $H^k$ in at most
            $n^{c_1\eps\log(1/\eps)}$ entries,
        \item[(ii)] $\rank B \le n^{1-c_2\eps^2}$,
        \item[(iii)] $\|B\|_\infty \le n^{\frac12+c_3\eps\log(1/\eps)}$.
    \end{itemize}
\end{statement}

Using this Statement, we can give an upper bound on the $\ell_p$-approximation
of rows of $H^k$:

\begin{multline*}
    (n^{-1}\sum_{y}|B_{x,y}-H^k_{x,y}|^p)^{\frac1p} \le
    n^{-\frac1p}(1 + \|B\|_\infty)\cdot |\{y\colon B_{x,y}\ne
    H^k_{x,y}\}|^{\frac1p} \le \\
    \le n^{-\frac1p+\frac12+c\eps\log(1/\eps)} \le n^{-\delta},\quad\mbox{if
    $p<2$ and $\eps<\eps(p)$}.
\end{multline*}

Theorem~\ref{th_walsh} follows.

Let us prove Statement~\ref{stm_walsh_hadamard}.
We will use the standard bound for binomial coefficients in terms of the entropy function
$h_2(p)=-p\log_2p-(1-p)\log_2(1-p)$:
\begin{equation}
    \label{binom_bound}
    \sum_{j=0}^s\binom{k}{j} \le 2^{k h_2(s/k)},\quad s\le k/2.
\end{equation}

\begin{proof}
    The construction almost repeats~\cite{AW17}, where the properties (i)
    and (ii) are established. We write down the essential steps to prove (iii).
    Given $k$ and $\eps$, we define
    $$
    l := \lceil 2k\eps \rceil - 1,\quad d := \lceil (1/2-\eps)k \rceil,
    $$
    and the ``core set''
    $$
    \mathcal C_{k,\eps}:=\{x\in\{0,1\}^k\colon
    \|x\|_1\in[(1/2-\eps)k,(1/2+\eps)k]\}.
    $$

    Step 1. We need a polynomial $q(z_1,\ldots,z_k)$, $\deg q\le d$, such that
    $$
    q(z)=(-1)^{\|z\|_1},\quad\mbox{if $z\in\{0,1\}^n$ and $l+1\le\|z\|_1\le
    l+d+1$.}
    $$
    In~\cite{AW17} the polynomial was taken in $\mathbb Z[z_1,\ldots,z_k]$ to
    prove non-rigidity over arbitrary field. As we are interested in the
    case of field $\R$, we simply take $q(z)=(-1)^{l+1}Q_d(z_1+\ldots+z_k-l-1)$ with
    $$
    Q_d(j)=(-1)^j,\;j=0,1,\ldots,d,\quad Q_d\in\mathbb R[t],\,\deg Q_d\le d.
    $$

    Step 2. Put $p(x,y):=q(x_1 y_1,\ldots,x_k y_k)$. We replace 
    $x_i^my_i^m\mapsto x_iy_i$ in $p$ and get another polynomial
    $\widetilde{p}$; however, $\widetilde{p}$ coincides with $p$ on
    $\{0,1\}^k\times \{0,1\}^k$. The number $M$ of monomials in
    $\widetilde{p}$ is bounded by
        \begin{equation}
            \label{monom_bound}
            M(\widetilde{p})\le \sum_{s=0}^{\deg q}\binom{k}{s} \le
            2^{kh_2(d/k)}\le 2^{k(1-c\eps^2)}.
        \end{equation}

    Step 3. Put $\widetilde{B}_{x,y}=\widetilde{p}(x,y)$. The crucial
    observation is that
    \begin{equation}
        \label{rank_monom}
    \rank\widetilde{B}\le M(\widetilde{p}).
    \end{equation}
        By construction, $\widetilde{B}_{x,y}=H^k_{x,y}$, if $\langle x,y\rangle \in [2\eps k,(1/2+\eps)k]$.

    Step 4. Fix $\widetilde{B}$: leave $B_{x,y}:=\widetilde{B}_{x,y}$, if
        $x,y\in \mathcal C_{k,\eps}$, and for other pairs $(x,y)$
        define $B_{x,y}:=H^k_{x,y}$. Changes occur in a small number of
        rows and columns, so the rank of $B$ is bounded from above:
        \begin{equation}
            \label{changes}
            \rank B \le \rank\widetilde{B} + 2|\{0,1\}^k\setminus \mathcal C_{k,\eps}|.
        \end{equation}

    The matrix $B$ is our approximation matrix. The property (ii) follows
    from~(\ref{monom_bound}), (\ref{rank_monom}) and (\ref{changes}). Note that
    $B$ can differ from $H^k$ only for $x,y\in \mathcal C_{k,\eps}$ with $\langle
    x,y\rangle\not\in[2k\eps,(1/2+\eps)k]$; obviosly, such pairs $(x,y)$ satisfy the following
    conditions:
    \begin{equation}\label{bad}
        \left\{
            \begin{aligned}
                \langle x,y\rangle < 2\eps k,\\
                x,y \in \mathcal C_{k,\eps}.
            \end{aligned}
            \right.
    \end{equation}

    Lemma 3.3~\cite{AW17} shows that for each $x$ there are less than
    $2^{ck\eps\log(1/\eps)}$ vectors $y$ such that~(\ref{bad}) holds true. The
    property (i) follows. 

    Now we will bound $\max|B_{i,j}|$. It's values either coincide with $H^k$,
    or come from the polynomial $p$. Hence,

\begin{multline}\label{max}
\|B\|_\infty \le 1 + \max_{(x,y)\colon(\ref{bad})} |p(x,y)| \le \\
    1 + \max_{z\in\{0,1\}^{n}\colon \sum z_i < 2\eps k} |q(z)| \le
    1 + \max\limits_{s=1,\ldots,l+1}|Q_d(-s)|.
\end{multline}

Newton interpolation allows us to write $Q_d$ in explicit form:
$$
Q_d(t) = \sum_{j=0}^d(-2)^j\binom{t}{j},\quad\mbox{where
}\binom{t}{j}=\frac{t(t-1)\cdots(t-j+1)}{j!}.
$$
We have
$
\left|\binom{-s}{j}\right| = \binom{s+j-1}{j} \le \binom{l+j}{j} = \binom{l+j}{l} \le \binom{l+d}{l}.
$
Hence, $|Q_d(-s)|\le 2^{d+1}\binom{l+d}{l}$.
Finally, using~(\ref{binom_bound}), we obtain from~\eqref{max}:
\begin{multline*}
    \|B\|_\infty \le 1+\max_{1\le s \le l+1}|Q(-s)| \le 1+2^{d+1}\binom{l+d}{l} \le \\
    \le 2^{k(\frac12-\eps) + (l+d)h_2(\frac{l}{l+d}) + O(1)} \le \\
    \le 2^{k(\frac12-\eps)+k(\frac12+\eps)h_2(4\eps) + O(1)} \le
    2^{k(\frac12 +c\eps\log(1/\eps))}.
$$
\end{multline*}
\end{proof}

Let us return to the approximation of functions. We seen that the Walsh system
admits good approximation in $L_p$, $p<2$. Now we will prove that Rademacher
system can't be well approximated in $L_1$ even in average. Recall that the
Rademacher system $\{r_k\}$ is the orthogonal system on $[0,1]$ defined in terms
of binary expansions: $r_k(\sum\limits_{j\ge1} 2^{-j}x_j)=(-1)^{x_k}$. 

\begin{statement}
    \label{stm_rademacher}
    Let $\eps\in(0,1)$. For any linear space $Q_m\subset
    L_1[0,1]$ of dimension $m\le (1-\eps)n$ we have
    $$
    \frac1n\sum_{k=1}^n \rho(r_k, Q_m)_{L_1[0,1]} \ge c\eps^2.
    $$
\end{statement}

\begin{proof}
    Consider the
    matrix $M$ with $n$ rows indexed by $k\in\{1,\ldots,n\}$ and
    $2^n$ columns indexed by binary vectors $x\in\{0,1\}^n$:
    $M_{k,x} := (-1)^{x_k}$. This corresponds to the matrix of values of
    $r_1,\ldots,r_n$ on the dyadic intervals $(\frac{j-1}{2^n},\frac{j}{2^n})$.
    Suppose that the functions $r_1,\ldots,r_n$ are
    approximated by a subspace $Q_m$; pick optimal approximating functions
    $g_1,\ldots,g_n$. We can average them on the dyadic intervals (this does not
    make the approximation worse) and obtain the following discretization:
    \begin{multline*}
    \inf_{\dim Q_m\le m} \sum_{k=1}^n \rho(r_k,Q_m)_{L_1[0,1]} =
        \inf_{\dim \mathrm{span}\,\{g_1,\ldots,g_n\}\le m} \sum_{k=1}^n \|r_k-g_k\|_1 = \\
    \inf_{\rank B\le m} 2^{-n}\sum_{k,x}|M_{k,x}-B_{k,x}| =
    \inf_{\dim R_m\le m} 2^{-n}\sum_{x\in\{-1,1\}^n}\rho(x,R_m)_{\ell_1^n}.
    \end{multline*}
    It is well-known\footnote{This is a particular case of the equality
    $d_n(B_p^N,\ell_q^N)=(N-n)^{1/q-1/p}$, $p\ge q$, proved independently by
    M.I.~Stesin (1975) and A.~Pietsch (1974), see~\cite[Ch.VI]{P85} for references and
    more details.} that one can't approximate all vertices of the cube
    $B_\infty^m := [-1,1]^m$:
    \begin{equation}
        \label{cube_width}
        d_m(B_\infty^n,\ell_1^n) = n-m,
    \end{equation}
    but we need an averaged lower bound. Fix $R_m\subset\R^n$ and let $G$ be the ``best''
    half of vertices, i.e. the subset of $\{-1,1\}^n$ of cardinality $2^{n-1}$
    such that $\rho(v,R_m)_{\ell_1^n}\ge\rho(u,R_m)_{\ell_1^n}$ for $v\not\in
    G$, $u\in G$. Let us apply the result from~\cite[Theorem 1]{ST89}:

    \begin{theoremA}
        Let $D\subset\{-1,1\}^n$ be such that $|D|\ge 2^{n-1}$ and let
        $\eps\in(0,1)$. Then there exists $\sigma\subset\{1,2,\ldots,n\}$,
        $|\sigma|\ge (1-\eps)n$, safisfying
        $$
        \mathrm{absconv}(P_\sigma D)\supset c\eps[-1,1]^\sigma,
        $$
        where $P_\sigma$ denotes the restriction map $(a_j)_{1\le j\le n}\mapsto
        (a_j)_{j\in\sigma}\in\R^\sigma$ and $c$ is a numerical constant.
    \end{theoremA}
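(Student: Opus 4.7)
The plan is to prove Theorem A by reformulating the conclusion via polar duality and then extracting $\sigma$ through a combination of an $\ell_2$ baseline estimate (from the density $|D|\ge 2^{n-1}$) with a coordinate-elimination step that handles the directions not controlled by the baseline.

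First I would use the bipolar theorem to turn the geometric containment into an analytic one: $\mathrm{absconv}(P_\sigma D)\supset c\eps[-1,1]^\sigma$ is equivalent to the inequality
$$
\max_{d\in D}\Big|\sum_{j\in\sigma}u_j d_j\Big|\ge c\eps\|u\|_1\quad\text{for every }u\in\R^\sigma.
$$
Thus the task becomes showing that we can choose a large coordinate set $\sigma$ on which no $u$ is ``hidden'' from $D$ in the above sense.

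Second, I would extract from the density condition the following base estimate valid on the full coordinate set: for every $u\in\R^n$,
$\max_{d\in D}|\langle u,d\rangle|\ge c_0\|u\|_2$.
The reason is that for uniform $d\in\{-1,1\}^n$ the Rademacher sum $\langle u,d\rangle$ has median of order $\|u\|_2$ by Khintchine's inequality, and any $D$ of density at least $1/2$ must intersect the upper half of the level sets of $|\langle u,\cdot\rangle|$, so it contains a vector attaining at least the median. Equivalently, $\mathrm{absconv}(D)\supset c_0 B_2^n$ already on the original coordinates. This immediately handles all $u$ for which $\|u\|_2\ge(c/c_0)\eps\|u\|_1$, i.e.\ $u$ with ``small effective support''.

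Third, I would deal with the remaining ``spread'' witnesses $u$ (with $\|u\|_1=1$, $\|u\|_2=O(\eps)$, hence effective support at least $\eps^{-2}$) by an iterative elimination. If such a witness exists on the current $\sigma$, pick the subset $T_u\subset\sigma$ of indices where $|u_j|$ exceeds a threshold $\theta$, remove $T_u$ from $\sigma$, and restart. The threshold $\theta$ is chosen so that (i) $|T_u|\le \|u\|_1/\theta$ is small, and (ii) the residual witness on $\sigma\setminus T_u$ has strictly smaller $\ell_1$-to-$\ell_2$ ratio, which in turn allows the base estimate of Step 2 to kick in within finitely many rounds. To show the procedure terminates with $|\sigma|\ge(1-\eps)n$, I would track a monovariant such as the total $\ell_2$-energy of the minimal witness, or alternatively run the argument on a random $\sigma$ (keeping each coordinate with probability $1-\eps/2$) and apply a union bound over an $\eps$-net of witness directions on the sphere.

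The main obstacle is Step~3: the $\ell_2$-bound of Step~2 is demonstrably too weak for truly flat $u$'s (ratio $\|u\|_2/\|u\|_1\sim n^{-1/2}$), so one must exploit the structure of $D$ more finely than just its density. The crucial input will be a combinatorial shattering ingredient (of Sauer--Shelah type, which already gives a set $\sigma_0$ of size $\gtrsim n/2$ with $P_{\sigma_0}D=\{-1,1\}^{\sigma_0}$) used to bootstrap the elimination, together with careful amortization ensuring that the total number of removed coordinates telescopes to at most $\eps n$.
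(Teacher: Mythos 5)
Your Step~1 (the polar-duality reformulation) is correct. However, Step~2 is false, and by a simple counterexample rather than a borderline technicality. Take $n\ge 2$, $D=\{d\in\{-1,1\}^n\colon d_1=-d_2\}$, so $|D|=2^{n-1}$, and take $u=e_1+e_2$. Then $\langle u,d\rangle=d_1+d_2=0$ for every $d\in D$, so $\max_{d\in D}|\langle u,d\rangle|=0$ while $\|u\|_2=\sqrt 2$. Hence no constant $c_0>0$ gives $\mathrm{absconv}(D)\supset c_0 B_2^n$, and the baseline estimate you rely on for ``small effective support'' witnesses is not available. The error is that the median of $|\langle u,\cdot\rangle|$ over the cube can be zero: for $u=e_1+e_2$ exactly half of $\{-1,1\}^n$ has $\langle u,d\rangle=0$, and a density-$1/2$ set $D$ can be precisely that half, so the Khintchine-type lower bound $\max_{d\in D}|\langle u,d\rangle|\ge c_0\|u\|_2$ on the full coordinate set simply fails. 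Note moreover that this $u$ has $\|u\|_2/\|u\|_1=1/\sqrt 2$, far above any small threshold $\eps$, so under your classification it would supposedly be handled by the baseline and never enter the elimination loop: in reality the freedom to drop coordinates is needed already for witnesses supported on two coordinates, not only for spread ones.

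Independently of this, Step~3 is explicitly not a proof. You flag ``the main obstacle'' yourself, propose two candidate mechanisms (a monovariant on witness energy, or a random restriction with a net and union bound) without verifying that either caps the total number of removed coordinates at $\eps n$, and you invoke Sauer--Shelah as the ``crucial input'' without showing how a shattered set $\sigma_0$ of size roughly $n/2$ with $P_{\sigma_0}D=\{-1,1\}^{\sigma_0}$ bootstraps to the much larger $\sigma$ of size $(1-\eps)n$ with the quantitative cube containment; pushing past $n/2$ is exactly the hard content of the theorem and cannot be waved through by ``careful amortization.'' As written this is a plan containing one false lemma and one unproved core step. For context, the paper itself does not prove Theorem~A; it imports it verbatim from Szarek and Talagrand~\cite[Theorem~1]{ST89}, a genuinely nontrivial result whose argument does not proceed via a global $\ell_2$ lower bound on the untouched coordinate set, for the reason exhibited above.
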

    Here $\mathrm{absconv}K := \mathrm{conv}(K\cup(-K))$; we note that
    $d_m(K,X)=d_m(\mathrm{absconv}K,X)$.

    We apply this theorem for $D:=G$ and $\eps/2$ to find a cube of
    dimension $|\sigma|\ge (1-\eps/2)n$. Then, using~(\ref{cube_width}), we finish the
    proof:
    \begin{multline*}
        2^{1-n}\sum_{u\in\{-1,1\}^n}\rho(u,R_m)_{\ell_1^n}
        \ge \max_{u\in G}\rho(u,R_m)_{\ell_1^n}
        \ge \max_{u\in G}\rho(P_\sigma u,P_\sigma R_m)_{\ell_1^\sigma} \ge \\
        \ge d_m(P_\sigma G,\ell_1^\sigma)
        \ge d_m(c\eps B_\infty^\sigma,\ell_1^\sigma)
        \ge c\eps\cdot((1-\frac{\eps}2)n-m) \ge c_1 n\eps^2.
    \end{multline*}
    Note that one does not require Theorem A if $\eps>1/2$; standard
    VC-dimension bounds will suffice.
\end{proof}
We see that the matrix $(M_{k,x})$ is $\ell_1$-rigid, but it is not square, as
required in~(\ref{l1rig}).

\section{Tensor signum rank and $\ell_1$-approximation}
\label{secttion_tensor}

We use the standard notation $[n]:=\{1,2,\ldots,n\}$.
We identify tensors with multidimensional arrays $T\in \R^{n_1\times
n_2\times\ldots\times n_d}$ consisting of numbers $(T_{i_1,\ldots,i_d})$
indexed by tuples $I=(i_1,\ldots,i_d)$, $i_1\in [n_1],\ldots,i_d\in[n_d]$. Such
tensor has
$$
N=n_1\ldots n_d
$$
elements. A tensor $T\ne 0$ has rank $1$ if
$$
T_{i_1,\ldots,i_d} = u^1_{i_1}u^2_{i_2}\cdots u^d_{i_d}
$$
for some vectors $u^1\in\R^{n_1},\ldots,u^d\in\R^{n_d}$.
Tensor has rank at most $m$, if it can be represented as a sum of $m$ rank-one
tensors. It is clear that a tensor from $\R^{n_1\times\ldots\times n_d}$ has rank at most
$N/\max\{n_j\}$: indeed, let $n_1$ be the maximal dimension, then
$$
T_{i_1,\ldots,i_d} =
\sum_{(i_2',\ldots,i_d')}T_{i_1,i_2',\ldots,i_d'}\delta_{i_2'}^{i_2}\cdots\delta_{i_d'}^{i_d}
$$
and each summand has rank one.

Let $S$ be a signum tensor, i.e. it consists of $\pm1$ elements. 
The signum rank of $S$ is the minimal rank of a tensor $T$ such that
$\sign T \equiv S$. It is denoted as $\rank_\pm S$. We suppose that $\sign 0=0$, so $T$ must have nonzero
elements. This is a straighforward generalization of the matrix case.

We show (see Statement~\ref{stm_low}) that almost all signum-tensors have high
signum rank and cannot be approximated in $\ell_1$ by low-rank tensors. Let us
connect signum rank and $\ell_1$-approximation.
There is an obvious inequality (for $x,y\in\R^N$):
$$
\|x-y\|_{\ell_1^N}\ge |\{i\colon \sign x_i \ne \sign y_i\}|\cdot
\min_{1\le i\le N}|x_i|.
$$
Hence, the $\ell_1$-distance from a tensor $T$ to a low-rank tensor $\widetilde
T$ is at least the Hamming distance between $\sign T$ and $S:=\sign\widetilde T$.
By definition $S$ has low signum rank, i.e. $\rank_\pm S\le\rank \widetilde{T}$.
There is a technical subtlety: if $\widetilde{T}$ has some zero entries
then $S$ is not a signum tensor; we can vary $\widetilde{T}$ a bit using
rank-one tensors and get rid of zero elements. So,
\begin{equation}
    \label{simple_bound}
    \inf_{\rank\widetilde{T}\le m} \|T-\widetilde{T}\|_{\ell_1^N} \ge
    \inf_{\rank_\pm S\le m+1}|\{I\colon\sign T_I\ne S_I\}|\cdot\min_I|T_I|.
\end{equation}

Denote by $\mathcal S_m(n_1,\ldots,n_d)$ the set of signum-tensors
$S\in\{-1,1\}^{n_1\times\cdots\times n_d}$ with $\rank_\pm S\le m$.
The cardinality of this set is denoted by
$S_m(n_1,\ldots,n_d)$.

Let $p_1,\ldots,p_l$ be some polynomials in $\R^k$ of degree at most $s$.
They define the set
$$
\Omega := \R^k\setminus\bigcup_{i=1}^l\{x\colon p_i(x)=0\}.
$$
Each point $x\in\Omega$ yields the signum vector $(\sign
p_1(x),\ldots,\sign p_l(x))\in\{-1,1\}^l$. Denote by $Z(p_1,\ldots,p_l)$ the
cardinality of the set of such signum-vectors; it is obvious that $Z$ does not
exceed the number of connected components of $\Omega$.
Warren~\cite{W68} proved that this number is less or equal than
$(4esl/k)^k$ for $l\ge k$.

If we denote by $Z_{\max}(k,l,s)$ the maximal possible value of $Z(p_1,\ldots,p_l)$
then Warren's bound implies
\begin{equation}
    \label{warren}
    Z_{\max}(k,l,s)\le (4esl/k)^k,\quad\mbox{for $l\ge k$.}
\end{equation}

The following statement was proven in~\cite{AFR85} for matrices.
\begin{statement}
    \label{stm_alon}
    $S_m(n_1,\ldots,n_d) \le Z_{\max}(m(n_1+\ldots+n_d), n_1\cdots n_d, d)$.
\end{statement}

\begin{proof}
    If a signum tensor $S$ has signum rank at most $m$, then it is represented
    as $S=\sign T$ for some tensor $T$ with $\rank T\le m$. 
    We have $T=\sum_{s=1}^m u^{1,s}\otimes\cdots\otimes u^{d,s}$ 
    for some vectors $u^{i,j}$. Hence the following equalities hold:
    \begin{equation}
        \label{rank_eq}
    S_{i_1,\ldots,i_d} = \sign(\sum_{s=1}^m u^{1,s}_{i_1} u^{2,s}_{i_2}
    \cdots u^{d,s}_{i_d}).
    \end{equation}

    Let us take the following point of view: there are variables
    $x^{k,s}_i$, $k\in [d]$, $i\in [n_k]$, $s\in [m]$; totally
    $m(n_1+\ldots+n_d)$ of them. There are also polynomials in these variables:
    $$
    q_{i_1,\ldots,i_d}(x) = \sum_{s=1}^m x^{1,s}_{i_1}x^{2,s}_{i_2}\cdots
    x^{d,s}_{i_d}.
    $$
    Given $S$ there should exist a value $x=u$,
    such that the vector $(\sign q_{i_1,\ldots,i_d}(u))$ equals $S$.
    Therefore,
    $$
    S_m(n_1,\ldots,n_d) \le Z(\{q_{i_1,\ldots,i_d}\}).
    $$
    It remains to use the definition of $Z_{\max}$.
\end{proof}

\begin{statement}
    \label{stm_low}
    Let $d\in\mathbb N$. For any sufficiently small $\eps>0$ and any numbers
    $n_1,\ldots,n_d$
    there is a set of signum tensors $\mathcal
    S^*\subset\{-1,1\}^{n_1\times n_2\times\ldots\times n_d}$  of
    cardinality
    $$
    |\mathcal S^*|\ge 2^N - 2^{N c(d)\eps\log(1/\eps)},\quad N:=n_1n_2\cdots n_d,
    $$
    with the following property. If a tensor $T$ has $\sign T\in\mathcal S^*$, then
    \begin{equation}
        \label{eq_low}
    \inf_{\rank \widetilde{T}\le \eps N/\max\{n_j\}}
    \|T-\widetilde{T}\|_{\ell_1^N} \ge \eps
    N\min_{i_1,\ldots,i_d}|T_{i_1,\ldots,i_d}|.
    \end{equation}
\end{statement}
We remark that all signum tensors $S\in\mathcal S^*$ have $\rank_\pm S > \eps
N/\max\{n_j\}$.

\begin{proof}
    Let $t=\eps N/\max\{n_j\}$. 
    If $t<1$, then $\rank\widetilde{T}\le t$ implies that $\widetilde{T}=0$
    and~\eqref{eq_low} is obvious; so we consider the case $t\ge1$.
    
    Put $m=\lceil t\rceil$ and $k := \lfloor \eps N\rfloor$. W.l.o.g. we assume
    that $t\not\in\mathbb Z$, so the inequality $\rank\widetilde{T}\le t$
    implies that $\rank\widetilde{T}\le m-1$.
    Consider the set $\mathcal S_m^k$ of signum-tensors that differ from
    tensors of $\mathcal S_m(n_1,\ldots,n_d)$ in at most $k$ entries. The set $\mathcal
    S^*$ is just the complement of $\mathcal S_m^k$. The
    inequality~\eqref{eq_low} follows from~(\ref{simple_bound}).

    Let us bound the cardinality of $\mathcal S_m^k$, using Statement~\ref{stm_alon}:
$$
    S_m(n_1,\ldots,n_d) \le Z_{\max}(m\sum n_j, N, d).
    $$
    We have $m\sum n_j\le 2t\sum n_j\le 2\eps Nd$, so
    Warren's bound~(\ref{warren}) gives us
    $$
    S_m(n_1,\ldots,n_d) \le (\frac{4edN}{2\eps dN})^{2\eps dN} \le
    2^{Nc(d)\eps\log(1/\eps)}.
    $$
    Further,
    $$
        |\mathcal S_m^k| \le S_m \sum_{j\le k}\binom{N}{j} \le S_m
        2^{N h_2(k/N)} \le S_m 2^{N h_2(\eps)}.
    $$
    Finally,
    $\log |\mathcal S_m^k| \le N c(d) \eps\log(1/\eps)$, as required.
\end{proof}

\section{Low-rank approximation of smooth functions}

Let $(X,\mu)$ be a measurable space, $\mathcal F$ be a set of real functions in
$L_1(X,\mu)$, $\mathcal A$ be a finite family of measurable subsets of $X$ and $\gamma>0$. We say that 
$\mathcal A$ is $\gamma$-shattered by 
$\mathcal F$ (with respect to measure $\mu$) if for any choice of signs
$\sigma\colon\mathcal A\to\{-1,1\}$ there is a function $f\in\mathcal F$ such
that
$$
\sigma(A)\cdot \int_A f(x)\,d\mu \ge \gamma,\quad \forall A \in \mathcal A.
$$

This definition is an integral version of fat-shattering dimension, a
generalization of Vapnik--Chervonenkis dimension for real-valued functions,
see the book~\cite{VHA15} for details.

Note that if sets in $\mathcal A=\{A_1,\ldots,A_N\}$ are pairwise disjoint,
then the discretization operator
$$
\mathcal I_{\mathcal{A}}\colon L_1(X,\mu)\to\R^N,\quad f\mapsto
(\int_{A_i}f\,d\mu)_{i=1}^N,
$$
satisfies
\begin{equation}
    \label{I_bound}
    \|f\|_{L_1(X,\mu)}\ge \|I_\mathcal Af\|_{\ell_1^N}.
\end{equation}

Consider a product space $X=X_1\times\cdots\times X_d$ with a product measure
$\mu=\mu_1\times\cdots\times\mu_d$.
Define the error of best $m$-term approximation of a function $f\colon X\to\R$
in $L_p(X,\mu)$ by tensor products:
\begin{equation}
    \label{eq_theta}
\Theta_m(f,L_p) := \inf_{u^{i,s}\colon X_i\to\mathbb R} \|f - \sum_{s=1}^m u^{1,s}(x_1)\cdots
u^{d,s}(x_d)\|_{L_p(X,\mu)}
\end{equation}
and $\Theta_m(\mathcal F,L_p) := \sup_{f\in\mathcal F}\Theta_m(f,L_p)$.

We say that $\mathcal A$ is a $(n_1\times\ldots\times n_d)$-product family of sets if
$$
\mathcal A = \{A_{i_1}^{(1)}\times\cdots\times A_{i_d}^{(d)}\colon 
i_1\in[n_1],\ldots,i_d\in[n_d]\}
$$
for some $A^{(s)}_{i_s}\subset X_s$. It is obvious that $|\mathcal A|=n_1\cdots
n_d$.

\begin{statement}
    \label{stm_f_low}
    Suppose that a class $\mathcal F\subset L_1(X,\mu)$
    $\gamma$-shatters a $(n_1\times\ldots\times n_d)$-product family $\mathcal
    A$ of disjoint sets. Then for $m\le c(d)N/\max\{n_j\}$,
    $N:=|\mathcal A|$, we have
    $$
    \Theta_m(\mathcal F,L_1(X,\mu)) \ge c(d)\gamma N.
    $$
\end{statement}

\begin{proof}
    Consider the discretization operator $\mathcal{I}_{\mathcal A}\colon
    L_1(X,\mu)\to\R^{n_1\times\ldots\times n_d}$. It
    maps rank-one functions $u^1(x_1)\ldots u^d(x_d)$ to rank-one tensors.
    By the shattering property, for any signum-tensor $\sigma\in\R^{n_1\times\cdots\times n_d}$ there is a
    function $f_\sigma\in\mathcal F$ such that the tensor $T(\sigma)=\mathcal{I}_{\mathcal
    A}f_\sigma$ has $\sign T(\sigma)=\sigma$ and $\min_I|T(\sigma)_I|\ge\gamma$.
    We apply Statement~\ref{stm_low} with some small $\eps$ to find
    a signum tensor $\sigma^*\in\mathcal S^*$ and then use~(\ref{I_bound})
    and~\eqref{eq_low}:
    $$
    \|f_{\sigma^*}-\sum_{s=1}^m u^{1,s}(x_1)\cdots u^{d,s}(x_d)\|_{L_1} \ge
    \|T(\sigma^*)-\widetilde{T}_m\|_{\ell_1^N} \ge \eps N\gamma.
    $$
\end{proof}

We will give some corollaries for $d$-variate functions.
Consider the space $C^r[0,1]^d$ of $r$-smooth functions
$f\colon[0,1]^d\to\mathbb R$ and the unit ball in this space:
$$
U(C^r[0,1]^d) = \{f\in C^r[0,1]^d\colon \max_{\|\alpha\|_1\le r} \|D^\alpha
f\|_\infty \le 1\}.
$$

\begin{statement}
    $$
    \Theta_m(U(C^r[0,1]^d),L_q) \asymp m^{-\frac{r}{d-1}},\quad 1\le q\le\infty.
    $$
\end{statement}
\begin{proof}
    We should establish the lower bound for $U(C^r)$ in $L_1$ metric.
    Pick some $n\in\mathbb N$ and divide the cube $[0,1]^d$ into $n^d$
    cubes:
    $$
    Q_I = \prod_{k=1}^d \left[\frac{i_k-1}{n},\frac{i_k}n\right],\quad
    I=(i_1,\ldots,i_d)\in [n]^d.
    $$
    A standard construction shows that our class
    $c(r,d)n^{-(r+d)}$-shatters $\mathcal A=\{Q_I\}_{I\in [n]^d}$.
    Indeed, take a ``hat'', i.e. infinitely smooth function
    $\varphi(x)$ with support in $(-\frac13,\frac13)$ and $\int \varphi(x)\,dx=1$.
Then $\max_{k\le r}\|\varphi^{(k)}\|_\infty \le c(r)$. 
Let $\varphi^{\otimes d}(x_1,\ldots,x_d)=\varphi(x_1)\cdots\varphi(x_d)$.
Given $\sigma\colon [n]^d\to\{-1,1\}$, we put
$$
    f_\sigma(x)=\sum_{I\in [n]^d} \sigma(I)\varphi^{\otimes d}(n(x-x_I)),
$$
where $x_I$ is the middle point of $Q_I$. As the supports of
    summands of $f_\sigma$ are disjoint,
    $\|f_\sigma\|_{C^r}\le c(r,d)n^r$, $\int_{Q_I}f_\sigma(x)\,dx = \sigma(I)n^{-d}$.

    Application of Statement~\ref{stm_f_low} gives us lower bound $\Theta_m\gg
    n^{-r}$ for $m\ll n^{d-1}$, as required.

The upper bound goes in $L_\infty$.
It is well-known that for $f\in C^r$ there is a function $g$ that is equal
    to a (Taylor) polynomial of degree $<r$ on each cube $Q_I$ and
    $\|f-g\|_\infty \ll n^{-r}$. (See, e.g.~\cite[\S8.4]{Zorich}.) Write $g$ as a sum $\sum_{\|\alpha\|_1<
    r}c_\alpha x^\alpha$, where $c_\alpha=c_\alpha(I)$ for $x\in Q_I$. For each
    $\alpha$ the coefficient function $c_\alpha$ may be viewed as a tensor in $\R^{n\times
    n\times\cdots\times n}$. It has rank $\le n^{d-1}$
    and hence $g$ has rank at most $c(r,d)n^{d-1}$. So, $\Theta_m\ll n^{-r}$ for
    $m\gg n^{d-1}$, as required.
\end{proof}

In order to obtain corollaries for classes of fractional smoothness, it is
convenient to work with trigonometric polynomials.
This research was motivated by the recent paper~\cite{BT15}, where lower bounds
for $\Theta_m^b$ were obtained. Here $\Theta_m^b\ge\Theta_m$ is the
error of the $m$-term approximation with the additional assumption that norms of
each term do not exceed $b$.

Let $\mathbf{n}=(n_1,\ldots,n_d)\in\mathbb N^d$ and denote by $\mathcal T(\mathbf{n})_\infty$
the set of real trigonometric polynomials of the form
$$
t(x) = \sum_{k\in\mathbb Z^d\colon |k_j|\le n_j}
c_ke^{i(k_1x_1+\ldots+k_dx_d)},\quad c_{-k}=\overline{c_k}
$$
with the property $\|t\|_\infty\le 1$.

It was proven by Temlyakov and Bazarkhanov,~\cite[Theorem 2.1]{BT15} that 
\begin{equation}
    \label{teml}
\Theta_m^b(\mathcal T(n,n\ldots,n),L_1)\ge C(b,d)>0\quad\mbox{if $m\log m\le
c(b,d)n^{d-1}$.}
\end{equation}
We slightly improve their result.

\def\thetheorem{\ref{thm_poly}}
\begin{theorem}
    For any $d\in\mathbb N$, $\mathbf{n}=(n_1,\ldots,n_d)$, $N:=n_1\cdots n_d$,
    we have
    \begin{equation}
        \label{our_poly}
    \Theta_m(\mathcal T(\mathbf{n})_\infty, L_1[-\pi,\pi]^d) \ge
        c_2(d)>0,\quad\mbox{if $m\le c_1(d)\frac{N}{\max\{n_j\}}$.}
    \end{equation}
\end{theorem}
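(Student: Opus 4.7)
The plan is to derive the bound from Statement~\ref{stm_f_low} by exhibiting a suitable $\gamma$-shattered product family for $\mathcal T(\mathbf n)_\infty$. Pick a large constant $K=K(d)$, set $M_j:=\lfloor n_j/K\rfloor$, and partition $[-\pi,\pi]$ in coordinate $j$ into $M_j$ disjoint arcs $A^{(j)}_{i_j}$ of equal length $2\pi/M_j$ centered at equispaced points $a^{(j)}_{i_j}$. Let $A_I=A^{(1)}_{i_1}\times\cdots\times A^{(d)}_{i_d}$; this is an $(M_1\times\cdots\times M_d)$-product family with $M:=\prod_j M_j\asymp N/K^d$ cells and $\max_j M_j\asymp\max_j n_j/K$. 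Statement~\ref{stm_f_low} then yields $\Theta_m(\mathcal T(\mathbf n)_\infty,L_1)\ge c(d)\gamma M\ge c_2(d)$ whenever $m\le c(d) M/\max_j M_j=c_1(d) N/\max_j n_j$, provided one can $\gamma$-shatter $\{A_I\}$ by $\mathcal T(\mathbf n)_\infty$ with $\gamma\ge c(d)/M$.

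For a sign pattern $\sigma\colon\prod_j[M_j]\to\{-1,1\}$ I would take
$$
t_\sigma(\mathbf x):=2^{-d}\sum_I\sigma(I)\prod_{j=1}^d\phi_{n_j}\bigl(x_j-a^{(j)}_{i_j}\bigr),
$$
where $\phi_n(x):=(n+1)^{-2}\bigl(\sin((n+1)x/2)/\sin(x/2)\bigr)^2$ is the normalized Fej\'er kernel: a nonnegative even trigonometric polynomial of degree $n$ with $\|\phi_n\|_\infty=1$, $\phi_n(x)\ge 1/2$ for $|x|\le c/n$, and the global decay $\phi_n(x)\le c(n\sin(x/2))^{-2}$. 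Then $t_\sigma$ is a real trigonometric polynomial in $\mathcal T(\mathbf n)$.

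Two estimates carry the argument. For the sup-norm, positivity gives $|t_\sigma(\mathbf x)|\le 2^{-d}\prod_j\sum_{i_j=1}^{M_j}\phi_{n_j}(x_j-a^{(j)}_{i_j})$, and the shifts along the equispaced grid of step $2\pi/M_j\asymp K/n_j$ sum pointwise to $1+O(K^{-2})$: the nearest peak contributes at most $1$, and the $k$th neighbour contributes at most $c/(Kk)^2$. Hence $\|t_\sigma\|_\infty\le 2^{-d}(1+O(K^{-2}))^d\le 1$ once $K=K(d)$ is large. For the integral,
$$
\int_{A_I}t_\sigma\,d\mathbf x=2^{-d}\sum_J\sigma(J)\prod_{j=1}^d\int_{A^{(j)}_{i_j}}\phi_{n_j}\bigl(x_j-a^{(j)}_{j_j}\bigr)\,dx_j;
$$
the diagonal $J=I$ contributes at least $2^{-d}\prod_j(c/n_j)=c^d/(2^d N)$ (from $\phi_n\ge 1/2$ on $|x|\le c/n$), while the off-diagonal terms factor coordinate-wise and the same decay bounds their total by $O(d/K)\cdot c^d/(2^d N)$. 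For $K$ large enough the diagonal wins, giving $\sigma(I)\int_{A_I}t_\sigma\,d\mathbf x\ge \gamma:=c(d)/N$, so $\gamma M\asymp 1/K^d$ is a positive constant depending only on $d$ and the theorem follows.

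The main obstacle is keeping $\|t_\sigma\|_\infty\le 1$ when the defining sum has $M\asymp N$ terms. Both this sup-norm bound and the off-diagonal integral estimate reduce to a single claim---that the shifts $\phi_n(x-a_i)$ along a grid of step $\asymp K/n$ sum pointwise to $1+O(K^{-2})$---which is where the constant $K=K(d)$ has to be chosen large enough to absorb the $d$-fold product.
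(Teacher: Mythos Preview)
Your approach is essentially the paper's: build $t_\sigma$ as a signed sum of tensor products of normalized Fej\'er kernels placed on a coarse grid, show this $\gamma$-shatters a product family with $\gamma\asymp N^{-1}$, and invoke Statement~\ref{stm_f_low}. The paper centers tiny boxes $Q_x^\delta$ around the grid points and controls $\|t_\sigma\|_\infty$ via the Marcinkiewicz discretization theorem, then passes from pointwise values to the box integrals using the Bernstein inequality; you instead take full arcs partitioning $[-\pi,\pi]$ and bound both the sup-norm and the integrals directly from positivity and the quadratic tail of the Fej\'er kernel. Your route is a little more elementary (no Marcinkiewicz, no Bernstein) and works fine.

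Two small points. First, your construction silently assumes every $n_j\ge K$, since otherwise $M_j=\lfloor n_j/K\rfloor=0$; the paper handles this by passing to the sub-tuple of large $n_j$'s at the outset, and you should say the same. Second, in the off-diagonal estimate you should compare against the \emph{upper} bound $\prod_j D_j\le (2\pi)^d/N$ rather than the lower bound $c^d/(2^dN)$: writing $D_j=\int_{A^{(j)}_{i_j}}\phi_{n_j}(x-a^{(j)}_{i_j})\,dx$, one has $\sum_{l}\beta^{(j)}_{i_j,l}\le D_j(1+O(K^{-1}))$, hence the off-diagonal contribution is $\le \prod_j D_j\cdot((1+O(K^{-1}))^d-1)$. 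Since $c/n_j\le D_j\le 2\pi/(n_j+1)$, this is still dominated by the diagonal once $K=K(d)$ is large, so your conclusion stands; just tighten the wording.
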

Note that this bound is
sharp, because any $t\in\mathcal T(\mathbf{n})$ may be written as a sum of
$\asymp N/\max\{n_j\}$ rank-one functions:
$$
t(x) = \sum_{(k_2,\ldots,k_d)}\widetilde{t}_{k_2,\ldots,k_d}(x_1)e^{ik_2x_2}\cdots
e^{ik_dx_d}.
$$

We will make use of several inequalities for $\mathcal T(\mathbf{n})$,
see~\cite{DTU18}, \S2.4. The Bernstein inequality states that
\begin{equation}
    \label{bernstein}
    \|\frac{\partial}{\partial x_j}t\|_p\le c\|t\|_p n_j,\quad j\in[d],\; 1\le p\le\infty.
\end{equation}
Consider a regular grid ($\mathbf{m}\in\mathbb N^d$)
$$
\mathcal X_{\mathbf{m}} := \{(\frac{\pi k_1}{m_1},\ldots,\frac{\pi k_d}{m_d})\colon
-m_i<k_i\le m_i\}
$$
and the space $\ell_p(\mathcal X_{\mathbf{m}})$ of values of polynomials on
that grid.
The Marcinkiewicz discretization theorem states that for
$t\in\mathcal T(\mathbf{n})$ and $1\le p\le\infty$,
\begin{equation}
    \label{marcinkiewicz}
    c(d)N^{-1/p}\|t\|_{\ell_p(\mathcal X_{2\mathbf{n}})} \le \|t\|_p \le
    C(d)N^{-1/p}\|t\|_{\ell_p(\mathcal X_{2\mathbf{n}})},
\end{equation}

\begin{proof}[Proof of Theorem~\ref{thm_poly}]
    We may assume that all $n_j$ are rather large. Indeed, for any
    subsequence $n_{i_1},\ldots,n_{i_k}$ of $\mathbf{n}$ we have
    $$
    \Theta_m(\mathcal T_\infty(\mathbf{n}),L_1[-\pi,\pi]^d) \ge
    \Theta_m(\mathcal T_\infty(n_{i_1},\ldots,n_{i_k}),L_1[-\pi,\pi]^k)
    $$
    so we can get rid of small $n_j$.

    As in the previous proof, we will show that our class shatters some product
    family. Let $\alpha,\delta>0$ be some small reals. Consider the grid
    $$
    \mathcal X^\alpha:=\mathcal X_{\mathbf{n}'},\quad (n_1',\ldots,n_d'):=(\lfloor
    \alpha n_1\rfloor,\ldots,\lfloor \alpha n_d\rfloor)
    $$
    and the corresponding family of parallelopipeds $Q_x^\delta :=
    x+\delta\prod_{j=1}^d[-n_j^{-1},n_j^{-1}]$,
    $x\in\mathcal X^\alpha$. They are disjoint for small $\delta$. We are going to
    prove that $\mathcal T(\mathbf{n})$ shatters $\{Q_x^\delta\}_{x\in\mathcal
    X^\alpha}$ if $\alpha$ and
    $\delta$ are sufficiently small.

    Instead of the ``hat'' function $\varphi$ we utilize the Fejer's kernel
    $$
    K_n(x) := \frac1n\frac{\sin^2(nx/2)}{\sin^2(x/2)}.
    $$
    We will use that
    $$
    \frac1n K_n(0)=1,\quad \frac1n|K_n(x)|\le\min(1,\frac{\pi^2}{n^2x^2}),\;x\in[-\pi,\pi].
    $$
    Let 
    $$
    \psi(x) := \frac{K_{n_1}(x_1)}{n_1}\frac{K_{n_2}(x_2)}{n_2}\cdots \frac{K_{n_d}(x_d)}{n_d}.
    $$
    Given signs $\sigma\colon\mathcal X^\alpha\to\{-1,1\}$, we define the polynomial
    $$
    t_\sigma(y) := \sum_{x\in\mathcal X^\alpha} \sigma(x)\psi(y-x).
    $$

    Let us estimate $t_\sigma(x^*)$ for $x^*\in \mathcal X^\alpha$. The main term for
    $t_\sigma(x^*)$ is $\sigma(x^*)\psi(0) = \sigma(x^*)$. The contribution of
    other terms is
    \begin{multline*}
    |\sum_{x\in\mathcal X^\alpha\setminus\{x^*\}} \sigma(x)\psi(x^*-x)| \le
    \sum_{x\in\mathcal X^\alpha\setminus\{0\}} |\psi(x)| \le \\
        \le \sum_{\substack{|k_i|\le n_i'\\k\ne0}} \frac{1}{n_1}K_{n_1}(\frac{\pi
        k_1}{n_1'})\cdots \frac{1}{n_d}K_d(\frac{\pi k_d}{n_d'}) \le \sum_{k\in\mathbb
        Z^d\setminus\{0\}}\prod_{j=1}^d \min(1,\frac{c\alpha^2}{k_j^2}).
    \end{multline*}
    It is easy to see that the last expression is bounded by $1/2$ for small
        $\alpha=\alpha(d)$. Fix such $\alpha$. Therefore, $\sigma(x^*)t_\sigma(x^*)\ge1/2$.

        Analogously, one can show that $t_\sigma$ is bounded by some $c(d)$ on
        the grid $\mathcal X_{2\mathbf{n}}$; hence, by the discretization theorem~(\ref{marcinkiewicz}),
        we obtain $\|t\|_\infty\le c_1(d)$. 

        Finally, using~(\ref{bernstein}) for $p=\infty$, we see that
        $\sigma(x^*)t_\sigma(x)\ge1/3$ for $x\in Q_{x^*}^\delta$ if $\delta$ is
        small. It follows that the family $\{Q_x^\delta\}_{x\in\mathcal X^\alpha}$
        is $\gamma$-shattered by $\mathcal T(\mathbf{n})_\infty$ with
        $\gamma\asymp\int_{Q_{x^*}^\delta} 1 \asymp N^{-1}$.
    It remains to use Statement~\ref{stm_f_low}.
\end{proof}

The lower bound for $\mathcal T(\mathbf{n})$ allows us to get the correct orders
of decay for classes of dominated mixed smoothness $W^{\mathbf{r}}_p$,
$\mathbf{r}=(r,r,\ldots,r)$.
See~\cite{BT15},
where lower bounds for $\Theta_m^b(W^{\mathbf{r}}_p,L_q)$
were obtained  using~\eqref{teml}. We apply~\eqref{our_poly} instead.
Using the inclusion $n^{-rd}\mathcal T(n,n,\ldots,n)_\infty \subset
cW^{\mathbf{r}}_\infty$ and~\eqref{our_poly}, we obtain
$$
\Theta_m(W^{\mathbf{r}}_\infty,L_1[0,1]^d) \gg m^{-\frac{rd}{d-1}}
$$
The corresponding upper bound is given in~\cite[Theorem 4.1]{T89}:
$$
\Theta_m(W^{\mathbf{r}}_2,L_2[0,1]^d) \ll m^{-\frac{rd}{d-1}}.
$$
Putting this together, we obtain Theorem~\ref{th_mixed}.

\end{document}